\titleformat{\section}[block]{\centering\Large}{\thesection}{1em}{}
\theoremstyle{definition}
\theoremstyle{remark}
\newtheorem*{rem}{Remark}
\theoremstyle{plain}
\newtheorem{thm}{Theorem}[section]
\newtheorem{prop}[thm]{Proposition}
\newtheorem{lem}[thm]{Lemma}
\newtheorem{cor}[thm]{Corollary}
\newtheorem{conj}[thm]{Conjecture}
\newtheorem{defi}[thm]{Definition}
\renewcommand\qedsymbol{$\blacksquare$}
\title{Periodic Inscription of Isosceles Trapezoids}
\author{Ali Naseri Sadr}
\date{}
\begin{document}

\maketitle

\begin{abstract}
    We prove that a pair of continuous disjoint periodic curves in $\mathbb{C}$ inscribes an isosceles trapezoid with any similarity type. The case of smooth curves can be identified with a Lagrangian intersection problem for a pair of Lagrangian cylinders in $\mathbb{R}\times S^1\times\mathbb{C}$, and the continuous case follows from the smooth one by a standard convergence argument.
\end{abstract}

\section{Introduction}
Let $\gamma_1, \gamma_2\colon \mathbb{R}\to \mathbb{C}$ be two continuous embeddings of the real line into $\mathbb{C}$ that satisfy the periodicity condition
\begin{equation*}
    \gamma_i(t+1) = \gamma_i(t) + \sqrt{-1}
\end{equation*}
for every $t$ and $i=1, 2$. Furthermore, assume the images of $\gamma_1$ and $\gamma_2$ are disjoint. Tao conjectured in \cite{1}
that there exist four points in $\gamma_1(\mathbb{R})\cup\gamma_2(\mathbb{R})$ which are vertices of a square; this is a variation of the Toeplitz square peg problem for periodic curves, and Hugelmeyer proved it in \cite{2}.

For any given isosceles trapezoid $Q$, we show there are four points in $\gamma_1(\mathbb{R})\cup\gamma_2(\mathbb{R})$ that are vertices of a quadrilateral similar to $Q$. The approach of \cite{2} does not directly generalize even to the case of rectangles. By contrast, in this article, we use a different approach to prove not only that every pair of periodic curves inscribes every similarity type of rectangles, but also every similarity type of isosceles trapezoids.
  
\begin{defi}
     Assume $Q$ is an isosceles trapezoid. We say that the pair $(\gamma_1, \gamma_2)$ admits a \textit{balanced} inscription of $Q$ if there exist
    $p_1,p_2\in\gamma_1(\mathbb{R})$ and $p_3,p_4\in\gamma_2(\mathbb{R})$ such that the quadrilateral formed by $p_1, p_2, p_3, p_4$ is similar to $Q$, the line segments $\overline{p_1p_2}$ and $\overline{p_3p_4}$ are parallel, and $|\overline{p_1p_2}| \leq |\overline{p_3p_4}|$.  
\end{defi}
Note that our definition depends on the order of the pair $(\gamma_1$, $\gamma_2)$ unless $Q$ is a rectangle.

\begin{thm}
\label{Main Thm}
Suppose $\gamma_1$ and $\gamma_2$ are two continuous disjoint periodic embeddings of the real line into the plane, and suppose $Q$ is an isosceles trapezoid. Then $(\gamma_1$, $\gamma_2)$ admits a balanced inscription of $Q$. Furthermore, there is a generic subset of smooth disjoint periodic pairs such that each pair in this set admits at least two balanced inscriptions of $Q$ that are not related under translation by $\sqrt{-1}$.   
\end{thm}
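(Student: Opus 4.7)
The plan, following the abstract, is to handle the smooth case as a Lagrangian intersection problem in $X:=\mathbb{R}\times S^1\times\mathbb{C}\cong(\mathbb{C}/i\mathbb{Z})\times\mathbb{C}$ and to reduce the continuous case to the smooth one by a uniform approximation argument. For each smooth periodic embedding $\gamma_i$, I would define the midpoint--chord cylinder
\[
\Lambda_i := \bigl\{\bigl(\tfrac{1}{2}(\gamma_i(s)+\gamma_i(t))\bmod i\mathbb{Z},\ \gamma_i(t)-\gamma_i(s)\bigr) : (s,t)\in\mathbb{R}^2\bigr\}\subset X,
\]
and equip $X$ with the symplectic form $\omega:=\operatorname{Im}(d\bar m\wedge dv)=dm_x\wedge dv_y-dm_y\wedge dv_x$. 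A direct calculation shows that the pullback of $\omega$ to $(s,t)$-space is identically zero for every smooth curve $\gamma_i$, so $\Lambda_i$ is a Lagrangian cylinder, universally in $\gamma_i$. Parametrizing the similarity type of $Q$ by $(\rho,c)$ with $\rho\geq 1$ the ratio of parallel sides and $c>0$ the height-to-shorter-side ratio, a balanced inscription of $Q$ is precisely an intersection point of $\Lambda_1$ with $T_{\pm}(\Lambda_2)$, where $T_{\pm}(m,v):=(m\mp(ic/\rho)v,\ v/\rho)$ and the sign encodes a reflection of $Q$. Since a short computation gives $T_{\pm}^{\ast}\omega=\rho^{-1}\omega$, the map $T_{\pm}$ is a conformal symplectomorphism of $X$ and $T_{\pm}(\Lambda_2)$ is again a Lagrangian cylinder.

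The main obstacle is producing and counting the intersection points $\Lambda_1\cap T_{\pm}(\Lambda_2)$. Both cylinders are non-compact, but periodicity forces a universal asymptotic profile: as $|s-t|\to\infty$, the chord $v=\gamma_i(t)-\gamma_i(s)$ approaches the ray $(t-s)\sqrt{-1}$, independent of $\gamma_i$, so the ends of $\Lambda_1$ and of $T_{\pm}(\Lambda_2)$ agree up to the scaling by $1/\rho$. This asymptotic matching lets one define a signed algebraic intersection pairing $[\Lambda_1]\cdot[T_{\pm}(\Lambda_2)]\in\mathbb{Z}$ that is invariant under isotopies with controlled asymptotics---for instance via wrapped Floer cohomology for the pair of non-compact Lagrangians, or directly via a relative intersection number on the compactified cylinders. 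I would then compute this pairing by homotoping $(\gamma_1,\gamma_2)$ through smooth disjoint periodic pairs (disjointness prevents collision singularities during the homotopy) to a convenient normal form. Since $\Lambda_i$ is topologically a cylinder and the Floer cohomology of a Lagrangian cylinder with a small Hamiltonian perturbation has total rank $\dim H^{\ast}(S^1\times\mathbb{R})=2$, one expects the algebraic pairing to be at least $2$ in absolute value, in the spirit of the Arnold--Floer lower bound for a Lagrangian and its small Hamiltonian pushoff.

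Granting the algebraic lower bound, the two clauses of the theorem follow. For a generic smooth disjoint pair, a Sard-type argument makes $\Lambda_1\pitchfork T_{\pm}(\Lambda_2)$ a discrete set whose unsigned cardinality is at least the algebraic count, yielding at least two geometric inscriptions of $Q$ not related by translation by $\sqrt{-1}$, as claimed (the two corresponding to the two cohomology generators of the cylinder). For an arbitrary smooth disjoint pair, the nonzero algebraic count forces at least one inscription even without transversality. Finally, an arbitrary continuous disjoint periodic pair $(\gamma_1,\gamma_2)$ is approximated uniformly on compact sets by smooth disjoint periodic pairs, each admitting a balanced inscription of $Q$; after normalizing by translation by $\sqrt{-1}$ to a compact fundamental domain, a subsequence of the corresponding four-tuples converges to a balanced inscription of $Q$ for the original continuous pair, completing the proof.
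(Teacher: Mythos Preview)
Your setup is genuinely different from the paper's, and the difference matters for the step you flag as ``the main obstacle.'' In the paper, the Lagrangian is built from the \emph{product} $\gamma_1\times\gamma_2$: one sets $L_\gamma=\pi\bigl(\psi_c^{-1}(\gamma_1\times\gamma_2)\bigr)$ in $(X,\omega_Q)$ with $\omega_Q=dx_1\wedge d\theta_1+c(1-c)\,dx_2\wedge dy_2$, and the second Lagrangian is $L_\gamma^\theta=R_\theta(L_\gamma)$, where $R_\theta$ is the time-$\theta$ flow of the autonomous Hamiltonian $H(z,w)=\lVert w\rVert^2/(2c(1-c))$. The trapezoid's aspect ratio $c$ is absorbed into the symplectic form, and the angle $\theta$ becomes the time of a Hamiltonian flow; thus $L_\gamma^\theta$ really \emph{is} a Hamiltonian pushoff of $L_\gamma$. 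This is what makes the rank-two Floer computation legitimate: one isotopes $(\gamma_1,\gamma_2)$ to a pair of vertical lines $(\delta_1,\delta_2)$ via Hamiltonian isotopies that do not escape to infinity, and in that model $L_\delta\cap L_\delta^\theta$ is a clean circle, giving $HF(L_\gamma,L_\gamma^\theta)\cong HF(L_\delta,L_\delta^\theta)\cong H^*(S^1;\mathbb{F}_2)\cong\mathbb{F}_2^2$.

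Your argument for the count, by contrast, has a real gap. The map $T_\pm$ satisfies $T_\pm^*\omega=\rho^{-1}\omega$, so for $\rho\neq 1$ it is not a symplectomorphism, let alone Hamiltonian; hence $T_\pm(\Lambda_2)$ is \emph{not} a Hamiltonian pushoff of $\Lambda_1$, and the Arnold--Floer heuristic you invoke does not apply. Worse, the ``signed algebraic intersection pairing'' you propose vanishes: if you run your model computation with $\gamma_i=\{\alpha_i\}\times\mathbb{R}$, the intersection $\Lambda_1\cap T_\pm(\Lambda_2)$ is a clean circle, whose contribution to any signed count is $\chi(S^1)=0$. So the intersection-number route cannot give the lower bound $2$; only the (mod~$2$) Floer rank does, and that requires a direct clean-intersection computation in the model, not the pushoff heuristic. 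A secondary issue is that your $\Lambda_i$ need not be embedded: self-intersections correspond to parallelogram configurations on a single curve $\gamma_i$, which certainly occur for many smooth periodic embeddings, whereas the paper's $L_\gamma$ is always embedded. Your approximation argument for the continuous case is fine and matches the paper's.
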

\begin{cor}
\label{rect cor}
Let $\theta\in (0,\frac{\pi}{2}]$; then every pair of 
continuous disjoint periodic curves in the plane inscribes a rectangle with angle $\theta$ between its two diagonals.
\end{cor}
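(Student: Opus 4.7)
The plan is to deduce the corollary directly from Theorem~\ref{Main Thm} by observing that every rectangle is, in particular, an isosceles trapezoid (one in which the two parallel sides happen to be equal). The only substantive content is the parameterization of rectangles by the angle between their diagonals, which is a one-line calculation.

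First, I would check that $\theta \mapsto Q_\theta$ is a bijection between $(0, \pi/2]$ and similarity classes of rectangles, where $Q_\theta$ denotes a rectangle whose diagonals meet at acute angle $\theta$. For a rectangle with sides $a \leq b$, the two diagonals have direction vectors $(a, b)$ and $(-a, b)$, so
\begin{equation*}
    \cos \theta = \frac{b^2 - a^2}{a^2 + b^2},
\end{equation*}
which is a monotone bijection between aspect ratios $a/b \in (0, 1]$ and angles $\theta \in (0, \pi/2]$. Hence, for each $\theta$, there is a unique similarity class $Q_\theta$ of rectangles with diagonal angle $\theta$, reducing to the square when $\theta = \pi/2$.

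Second, I would apply Theorem~\ref{Main Thm} to the isosceles trapezoid $Q_\theta$ and the pair $(\gamma_1, \gamma_2)$. This produces $p_1, p_2 \in \gamma_1(\mathbb{R})$ and $p_3, p_4 \in \gamma_2(\mathbb{R})$ with $\overline{p_1 p_2} \parallel \overline{p_3 p_4}$, $|\overline{p_1 p_2}| \leq |\overline{p_3 p_4}|$, forming a quadrilateral similar to $Q_\theta$. Since $Q_\theta$ is a rectangle, the inequality is automatically an equality, and the four points form a rectangle similar to $Q_\theta$, hence one whose diagonals cross at angle $\theta$.

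There is essentially no obstacle: the corollary is a direct specialization of the main theorem. The only point requiring attention is making sure that the conventions adopted in the proof of Theorem~\ref{Main Thm} do not implicitly exclude the rectangular case; if such an exclusion occurred, one would instead fix a sequence of non-rectangular isosceles trapezoids $Q_n \to Q_\theta$, obtain balanced inscriptions $(p_1^n, p_2^n, p_3^n, p_4^n)$ from Theorem~\ref{Main Thm}, and extract a limit using the periodicity of $\gamma_1, \gamma_2$ (to quotient by translation by $\sqrt{-1}$ and regain compactness) together with the disjointness of the images (to rule out degeneration). This is a routine convergence argument of the same flavor as the one sketched in the abstract.
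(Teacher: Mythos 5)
Your proposal is correct and is essentially the paper's own argument: the paper proves the corollary by applying Theorem~\ref{Main Thm} with $(c=\tfrac12,\theta)$, i.e.\ by treating the rectangle with diagonal angle $\theta$ as the isosceles trapezoid with $c=\tfrac12$, exactly as you do. Your fallback limiting argument is unnecessary, since the paper's conventions explicitly allow $c=\tfrac12$ and hence include rectangles.
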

We conjecture that Theorem \ref{Main Thm} is optimal, in the following sense.
\begin{conj}
    Let $Q$ be a quadrilateral that admits an inscription in any pair of disjoint periodic curves in $\mathbb{C}$. Then $Q$ is an isosceles trapezoid.
\end{conj}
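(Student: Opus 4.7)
The plan is to prove the contrapositive: given a quadrilateral $Q$ that is not an isosceles trapezoid, I will construct a pair of disjoint periodic embeddings $(\gamma_1,\gamma_2)$ of $\mathbb{R}$ into $\mathbb{C}$ such that no four points of $\gamma_1(\mathbb{R})\cup\gamma_2(\mathbb{R})$ are the vertices of a quadrilateral similar to $Q$. The argument splits into two cases according to whether $Q$ has a pair of parallel sides.

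If $Q$ has no pair of parallel sides, take the two parallel vertical lines $\gamma_1(t)=it$ and $\gamma_2(t)=1+it$. Any four of their points with at most three collinear must lie two on each line, and the resulting non-degenerate quadrilateral has two of its sides along these vertical lines, hence parallel, contradicting the hypothesis on $Q$. The remaining case is that $Q$ is a trapezoid that fails to be isosceles, which includes non-square rhombi and non-rectangular parallelograms. Here two parallel lines no longer work, since by rotation and scaling one can inscribe every similarity class of trapezoid in such a pair. I would instead look for curves related by a glide-reflection symmetry, for instance $\gamma_2(t)=\overline{\gamma_1(-t)}+c$ for a suitable $c\in\mathbb{C}$, so that any balanced $2$-$2$ inscription is matched by its mirror image. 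The guiding principle is that inscriptions fixed by this symmetry are precisely the isosceles trapezoids, so any hypothetical inscription of a non-isosceles $Q$ would be forced to occur in genuinely asymmetric pairs, and a subsequent small perturbation of the symmetric pair should remove them.

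The main obstacle is ruling out every type of inscription of $Q$ simultaneously: besides the balanced $2$-$2$ configurations, there are $3$-$1$ and $4$-$0$ configurations and unbalanced $2$-$2$ ones, none of which are directly constrained by the above symmetry. I expect a complete proof must combine the symmetric ansatz with a genericity argument, perturbing within a suitable space of symmetric curve pairs and invoking a transversality statement in the spirit of the Lagrangian intersection framework used for the paper's main theorem. The decisive step would be a mod-$2$ index computation showing that the algebraic count of inscriptions of a trapezoid $Q$ vanishes precisely when $Q$ fails to be isosceles, thus matching the nontrivial index that drives the existence result of Theorem~\ref{Main Thm}. Pinning down this exact correspondence between the symmetry type of $Q$ and the parity of an intersection invariant is, I believe, the reason the statement remains a conjecture.
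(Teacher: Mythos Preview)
This statement is presented in the paper as a \emph{conjecture}; the paper gives no proof of it, so there is no argument of the paper's to compare your proposal against.

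Your first case (quadrilaterals with no pair of parallel sides) is correct and is the easy half. With two disjoint vertical lines, any four points forming a non-degenerate simple quadrilateral must split two and two between the lines, and a short check on cyclic orderings shows that the only simple quadrilaterals that arise this way are trapezoids: if the two ``same-line'' pairs were the diagonals rather than a pair of opposite sides, the resulting quadrilateral would necessarily be self-crossing.

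The second case is where the content lies, and here your proposal is not a proof but a heuristic outline, as you yourself acknowledge. Two points are worth flagging. First, a vanishing mod~$2$ count of inscriptions would not by itself produce a pair of curves with \emph{no} inscription of $Q$; it would only say that generically inscriptions occur in cancelling pairs, which still allows a positive (even) number of them. You would need an explicit construction of a pair admitting none, and your glide-reflection ansatz together with ``a subsequent small perturbation'' is not such a construction. Second, as you note, any candidate pair must simultaneously exclude all inscription types ($4$--$0$, $3$--$1$, and all $2$--$2$ splittings), and your symmetry idea constrains only the balanced $2$--$2$ ones. So your proposal correctly isolates why the statement remains open, but it does not resolve it.
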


In contrast to peg problems for closed curves, we can deduce the periodic peg problem for continuous curves from the case of smooth curves using a standard convergence argument, so it suffices to prove the result for a pair of smooth periodic curves.
We prove Theorem \ref{Main Thm} for smooth curves using symplectic geometry. In particular, we will use Floer homology for a pair of non-compact Lagrangian cylinders in $(\mathbb{R}\times S^1\times\mathbb{C}, \omega)$, where the symplectic form depends on the isosceles trapezoid $Q$. Our approach draws inspiration from the ideas in \cite{8,3}.

\section*{Acknowledgments}
The author is grateful to his advisors, John Baldwin and Josh Greene, for their invaluable guidance, support, and insightful conversations about this work.

\section{Symplectic Setting}
Consider two disjoint smooth periodic curves $\gamma_1\colon\mathbb{R}\to\mathbb{C}$ and $\gamma_2\colon\mathbb{R}\to\mathbb{C}$ in the plane; we will identify each $\gamma_i$ with its image in the following. Fix an isosceles trapezoid $Q$. The similarity type of $Q$ is determined by two pieces of information, the angle between its diagonals and the ratio its two diagonals intersect each other. Let $\theta$ in $(0,\pi)$ be the angle, and assume the diagonals intersect each other with ratio $\frac{c}{1-c}$ for some fixed $c$ in $(0,\frac{1}{2}]$; see  Figure~\ref{fig1}.

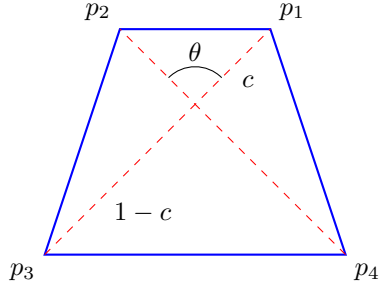
\begin{figure}[h]
    \begin{center}
\begin{tikzpicture}[scale=1]
  % Define the vertices of the trapezoid (taller version)
  \coordinate (A) at (0,0);    % Bottom left (p_3)
  \coordinate (B) at (4,0);    % Bottom right (p_4)
  \coordinate (C) at (3,3);    % Top right (p_1)
  \coordinate (D) at (1,3);    % Top left (p_2)
  
  % Label vertices in counter-clockwise order starting at top right:
  \node[above right] at (C) {$p_1$};
  \node[above left]  at (D) {$p_2$};
  \node[below left]  at (A) {$p_3$};
  \node[below right] at (B) {$p_4$};
  
  % Calculate the intersection of the diagonals (O)
  % Diagonals: A--C and B--D intersect at O:
  % A=(0,0), C=(3,3) so O lies on the line y=x.
  % B=(4,0), D=(1,3) lie on the line y=4-x.
  % Solving x = 4 - x gives x=2 and y=2.
  \coordinate (O) at (2,2);
  
  % Draw the trapezoid
  \draw[thick, blue] (A) -- (B) -- (C) -- (D) -- cycle;
  
  % Draw the diagonals (dashed red)
  \draw[dashed, red] (A) -- (C);
  \draw[dashed, red] (B) -- (D);
  
  % Label segments on diagonal AC (optional)
  \node[below right] at ($ (A)!0.4!(O) $) {$1-c$};
  \node[below right] at ($ (O)!0.5!(C) $) {$c$};
  
  % Draw an arc on the complementary angle, between the rays from O to C and O to D.
  % Compute directions:
  %   O -> C: from O=(2,2) to C=(3,3) gives vector (1,1) with angle 45°
  %   O -> D: from O=(2,2) to D=(1,3) gives vector (-1,1) with angle 135°
  % Draw the arc from 45° to 135° with a radius of 0.5.
  \draw (O) ++(45:0.5) arc (45:135:0.5);
  
  % Place the angle label along the bisector (at 90°) of the arc, offset outward.
  \node at ($(O)+(90:0.7)$) {$\theta$};
  
\end{tikzpicture}
    \end{center}
    \caption{An annotated isosceles trapezoid $Q$.}
    \label{fig1}
\end{figure}

We always assume the pair $(c, \theta)$ is fixed in what follows.
Define a map $\psi_c\colon \mathbb{C}^2\to \mathbb{C}^2$ by
\begin{equation*}
    \psi_c(z,w) = (z+cw, z+ (c-1)w).
\end{equation*}
Let $R_\theta\colon\mathbb{C}^2\to\mathbb{C}^2$ denote the map
\begin{equation*}
    (z,w) \mapsto (z, e^{\sqrt{-1}\theta}w).
\end{equation*}
Observe that $p_1, p_2, p_3, p_4$
cyclically label the vertices of an isosceles trapezoid similar to Q, with $\overline{p_1p_2}$ parallel to $\overline{p_3p_4}$
and $|\overline{p_1p_2}| \leq |\overline{p_3p_4}|$, iff there exists $(z, w)\in \mathbb{C}^2$
such that $\psi_c(z, w) = (p_1, p_3)$ and
$\psi_c \circ R_\theta(z, w)=(p_2, p_4)$.
Therefore, the balanced inscriptions of $Q$ in $(\gamma_1, \gamma_2)$ are in one-to-one correspondence with the set 
$$\psi_c^{-1}(\gamma_1\times\gamma_2)\cap R_\theta(\psi_c^{-1}(\gamma_1\times\gamma_2)).$$
We define a pair of symplectic forms on $\mathbb{C}^2$ by 
\begin{align*}
    &\omega_Q\coloneqq dx_1\wedge dy_1 + c(1-c)dx_2\wedge dy_2, \\
    & \omega'_Q\coloneqq (1-c)dx_1\wedge dy_1 + cdx_2\wedge dy_2.
\end{align*}
 Define a Hamiltonian $H$ on $(\mathbb{C}^2, \omega_Q)$ by $H(z,w) = \frac{\Vert w \Vert^2}{2c(1-c)}$; $R_\theta$ is the flow of $H$ at time $\theta$. 
 A quick computation shows that $\psi_c^*(\omega'_Q)=\omega_Q$.
 The map $\psi_c$ is a diffeomorphism, and since $\gamma_1\times\gamma_2$ is a product Lagrangian in $(\mathbb{C}^2, \omega'_Q)$, we conclude that both $\psi_c^{-1}(\gamma_1\times\gamma_2)$ and its image under $R_\theta$ are Lagrangian submanifolds in $(\mathbb{C}^2, \omega_Q)$. We consider two actions of $\mathbb{Z}$ on $\mathbb{C}^2$, one generated by $T_1(z,w) = (z+\sqrt{-1}, w+\sqrt{-1})$ and one generated by $T_2(z,w)=(z+\sqrt{-1}, w)$. We have
\begin{equation*}
    \psi_c\circ T_2 = T_1\circ\psi_c.
\end{equation*}
Notice that $T_2$ is a symplectomorphism of $(\mathbb{C}^2, \omega_Q)$, and $\gamma_1\times\gamma_2$ is invariant under $T_1$ because we assumed both curves are periodic. Hence, $\psi_c^{-1}(\gamma_1\times\gamma_2)$ is invariant under $T_2$. Furthermore, we have $H\circ T_2=H$; thus the flow of $H$ is invariant under $T_2$, and this shows $R_\theta(\psi_c^{-1}(\gamma_1\times\gamma_2))$ is also invariant under $T_2$. We consider the quotient of $(\mathbb{C}^2, \omega_Q)$ under the action by $T_2$. Denote the resulting symplectic manifold by $(X,\omega)$, and let $\pi\colon \mathbb{C}^2\to X$ be the quotient map; this manifold is diffeomorphic to $\mathbb{R}\times S^1\times \mathbb{C}$, and the form $\omega$ is given by $dx_1\wedge d\theta_1+c(1-c)dx_2\wedge dy_2$. 
Consider the vector field $V(x_1, \theta_1, x_2, y_2) = \partial_{x_1} + \partial_{x_2}$ on $X$; this vector field has complete flow, and it is  symplectically dilating. The projection map $\pi\colon\mathbb{C}^2\to X$ endows $X$ with an $\omega$-compatible complex structure which we denote by $j$, and the function $g(x_1, \theta_1, x_2, y_2) = x_1^2 + x_2^2+ y_2^2$ is a subharmonic exhaustion of $(X, j)$. Thus $(X,\omega)$ is Weinstein at infinity; see \cite{9} for the definition.
The Hamiltonian $H$ reduces to a Hamiltonian on $X$; by an abuse of notation, we also denote its flow by $R_\theta$. The Lagrangian $\psi^{-1}_c(\gamma_1\times\gamma_2)$ projects to a Lagrangian infinite cylinder $L_\gamma$ in $(X, \omega)$. We denote $R_\theta(L_\gamma)$ by $L_\gamma^{\theta}$.

Our goal is to define a Lagrangian Floer homology for the pair $(L_\gamma, L_\gamma^{\theta})$ and prove it is non-trivial. This will show the intersection $L_\gamma\cap L_\gamma^\theta$ is non-empty. As a result, we must have an intersection point between $\pi^{-1}(L_\gamma)=\psi_c^{-1}(\gamma_1\times\gamma_2)$ and $\pi^{-1}(L_\gamma^\theta)=R_\theta(\psi_c^{-1}(\gamma_1\times\gamma_2))$. Hence, we need to show Floer homology for the pair $(L_\gamma, L_\gamma^\theta)$ is well-defined. Since the two Lagrangians are non-compact, we have to show their intersection is compact in order to establish their Floer homology is well-defined and invariant under compactly supported Hamiltonian isotopies. 
\begin{rem}
   Notice that the intersection $L_\gamma\cap L_\gamma^\theta$ parametrizes the orbits of balanced inscriptions of $Q$ in $(\gamma_1, \gamma_2)$ under translation by $\sqrt{-1}$.
\end{rem}
The following lemma is the main result we will need for later compactness arguments. Its proof follows easily from the notion of the width of a compact set $K\subset\mathbb{C}$, i.e. the infimal width of an infinite strip containing $K$. One simply notes that width
scales linearly under similarity transformations and is positive when $K$ is the vertex
set of a triangle.
\begin{lem}
\label{cpt lemma}
    Consider a positive number $N$ and $(z,w)\in\mathbb{C}^2$. Assume $z$, $z+cw$, and $z+ce^{\sqrt{-1}\theta}w$ are inside the strip $[-N,N]\times\mathbb{R}$. Then there is a constant $b(N,c,\theta)$ depending only on $N,c,$ and $\theta$ such that $\Vert w\Vert<b(N,c,\theta)$. \hfill\qedsymbol
\end{lem}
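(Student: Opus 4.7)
The plan is to follow the hint and reduce the claim to the scaling behavior of width. First, observe that the three points $z$, $z+cw$, and $z+ce^{\sqrt{-1}\theta}w$ are the vertices of a triangle $T(z,w)$ which is similar to the fixed triangle $T_0$ with vertices $0$, $1$, $e^{\sqrt{-1}\theta}$: indeed, $T(z,w)$ is obtained from $T_0$ by the similarity $\zeta\mapsto z+cw\cdot\zeta$, which scales lengths by the factor $c\lVert w\rVert$.

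Next, for a compact set $K\subset\mathbb{C}$ define its \emph{width} $W(K)$ to be the infimum of the widths of closed infinite strips containing $K$. Two properties are immediate: (i) $W$ is monotone under inclusion, and (ii) $W(\phi(K))=\lambda\cdot W(K)$ for any similarity $\phi$ of $\mathbb{C}$ with scale factor $\lambda$, since strips are sent to strips with proportionally scaled widths. Moreover, if $K$ is the vertex set of a non-degenerate triangle, then $W(K)>0$: any strip containing $K$ must contain the triangle's convex hull, which has positive minimal strip-width (equivalently, positive minimal altitude). Since $\theta\in(0,\pi)$, the points $0,1,e^{\sqrt{-1}\theta}$ are not collinear, so $W(T_0)=:W(\theta)>0$ depends only on $\theta$.

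Finally, apply these properties. The hypothesis places $T(z,w)$ inside the strip $[-N,N]\times\mathbb{R}$, so $W(T(z,w))\leq 2N$. On the other hand, $W(T(z,w))=c\lVert w\rVert\cdot W(\theta)$ by similarity. Combining these gives
\begin{equation*}
\lVert w\rVert \leq \frac{2N}{c\,W(\theta)},
\end{equation*}
so we may take $b(N,c,\theta):=\frac{2N}{c\,W(\theta)}+1$. There is no real obstacle here; the only point requiring any care is verifying the scaling property of $W$, which is a direct check since rotations and dilations send strips to strips of the appropriate width.
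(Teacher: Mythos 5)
Your proof is correct and follows exactly the route the paper sketches: express the three points as the image of the fixed triangle $0,1,e^{\sqrt{-1}\theta}$ under the similarity $\zeta\mapsto z+cw\zeta$, then use that width scales by the similarity factor $c\Vert w\Vert$, is positive for a non-degenerate triangle since $\theta\in(0,\pi)$, and is at most $2N$ inside the strip. No gaps; the degenerate case $w=0$ is trivially covered by your bound.
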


\begin{cor}
    The intersection $L_\gamma\cap L_\gamma^{\theta}$ is compact.
\end{cor}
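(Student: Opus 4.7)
The plan is to derive compactness of $L_\gamma \cap L_\gamma^\theta$ directly from Lemma~\ref{cpt lemma}. Since $X \cong \mathbb{R}\times S^1\times\mathbb{C}$, the $S^1$ factor is compact automatically, so the task reduces to bounding the three Euclidean coordinates $(x_1, x_2, y_2)$ of any point in the intersection. Closedness of $L_\gamma\cap L_\gamma^\theta$ is free because both $L_\gamma$ and $L_\gamma^\theta$ are properly embedded Lagrangian cylinders in $X$ and hence closed subsets; bounded plus closed will then give compactness.

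To prepare the input for the lemma, I would first use the periodicity condition $\gamma_i(t+1)=\gamma_i(t)+\sqrt{-1}$ to produce a strip $[-N,N]\times\mathbb{R}$ containing both curves: modulo the $\sqrt{-1}\mathbb{Z}$-action each $\gamma_i$ is the continuous image of $[0,1]$, hence compact, so its projection to the real axis is bounded. Next, given $q\in L_\gamma\cap L_\gamma^\theta$, I would lift to $(z,w)\in\mathbb{C}^2$ and record the four affine images
\begin{align*}
p_1 &= z+cw \in \gamma_1, & p_3 &= z+(c-1)w \in \gamma_2,\\
p_2 &= z+ce^{\sqrt{-1}\theta}w \in \gamma_1, & p_4 &= z+(c-1)e^{\sqrt{-1}\theta}w \in \gamma_2.
\end{align*}
A one-line calculation gives $z=(1-c)p_1+cp_3$, exhibiting $z$ as a convex combination of points in the strip, so $z$ itself lies in $[-N,N]\times\mathbb{R}$; the points $p_1,p_2\in\gamma_1$ lie in the strip by construction.

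Having placed $z$, $z+cw$, and $z+ce^{\sqrt{-1}\theta}w$ all in $[-N,N]\times\mathbb{R}$, Lemma~\ref{cpt lemma} immediately yields $\Vert w\Vert<b(N,c,\theta)$, which bounds $x_2$ and $y_2$; meanwhile $x_1=\mathrm{Re}(z)\in[-N,N]$ is already bounded. Projecting down to $X$ completes the boundedness half, and closedness finishes the argument. I do not anticipate any real obstacle: Lemma~\ref{cpt lemma} was stated with exactly this application in mind, and the only mildly subtle observation is that the affine structure of $\psi_c$ forces the center $z$ to sit on the diagonal $\overline{p_1p_3}$, thereby inheriting the strip bound from the curves themselves.
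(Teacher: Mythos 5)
Your proof is correct and follows essentially the same route as the paper: bound $\Vert w\Vert$ via Lemma~\ref{cpt lemma} after placing $z$, $z+cw$, and $z+ce^{\pm\sqrt{-1}\theta}w$ in the strip $[-N,N]\times\mathbb{R}$, then conclude the intersection sits in $[-N,N]\times S^1\times D_{b(N,c,\theta)}$. You even make explicit two points the paper leaves implicit --- that $z=(1-c)p_1+cp_3$ is a convex combination and hence in the strip, and that closedness of the properly embedded Lagrangians supplies the closed-plus-bounded conclusion.
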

\begin{proof}
    We can find a positive number $N$ such that both $\gamma_1$ and $\gamma_2$ are inside $[-N, N]\times\mathbb{R}$. Consider $\pi(z,w)$ in $L_\gamma\cap L_\gamma^{\theta}$. Then $z$, $z+cw$, and $z+ce^{-\sqrt{-1}\theta}w$ are all inside this strip, so we can apply Lemma \ref{cpt lemma}. Thus $\Vert w \Vert$ is bounded, and this shows $\pi(z,w)$ is in $[-N, N]\times S^1\times D_{b(N,c,\theta)}$, where $D_{b(N,c,\theta)}$ is the closed disk with radius $b(N,c,\theta)$ around the origin in $\mathbb{C}$.  
\end{proof}

We will work with mod $2$ coefficients for singular homology and Floer homology in the following. Define the path space $\mathcal{P}(L_\gamma, L_\gamma^\theta)$ by
\begin{equation*}
    \{x\in W^{1,2}([0,1], X)\hspace{1mm}|\hspace{1mm} x(0)\in L_\gamma,\hspace{1mm} x(1)\in L_\gamma^\theta\}.
\end{equation*}
Let $\lambda = x_1d\theta_1+c(1-c)x_2dy_2$ be a primitive for $\omega$ on $X$, and consider a compactly supported Hamiltonian $\widehat{H}\colon X\times[0,1]\to \mathbb{R}$. 
We define the symplectic action one form $\omega_{\widehat{H}}$ on the path space $\mathcal{P}(L_\gamma, L_\gamma^\theta)$ by
\begin{equation*}
    \omega_{\widehat{H}}(\xi) = \int_0^1 \omega(\xi, \Dot{x}(t)-X_{\widehat{H}}(x(t),t))dt
\end{equation*}
for every $x\in\mathcal{P}(L_\gamma, L_\gamma^\theta)$ and $\xi\in T_x\mathcal{P}(L_\gamma, L_\gamma^\theta)$. In general, this defines a closed one-form on the path space between two Lagrangians, but it is not necessarily exact; in this case, we have the following. 

\begin{lem}
\label{Action Functional}
    The form $\omega_{\widehat{H}}$ on $\mathcal{P}(L_\gamma, L_\gamma^\theta)$ is exact for every Hamiltonian $\widehat{H}$.
\end{lem}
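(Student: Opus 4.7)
The plan is to reduce the exactness of $\omega_{\widehat H}$ to a matching of $\lambda$-periods of $L_\gamma$ and $L_\gamma^\theta$ on corresponding loops. A loop in $\mathcal{P}(L_\gamma, L_\gamma^\theta)$ corresponds to a smooth annulus $A\colon S^1\times[0,1]\to X$ with $A(\cdot,0)\subset L_\gamma$ and $A(\cdot,1)\subset L_\gamma^\theta$. Using $\omega(\cdot, X_{\widehat H}) = -d\widehat H$ together with the vanishing of $\int_{S^1}\partial_s\widehat H(A(s,t),t)\,ds$ around the loop coordinate, the Hamiltonian contribution to the period of $\omega_{\widehat H}$ drops out, so the period equals $\int_A\omega$. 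Stokes' theorem then rewrites this as $\int_{\alpha_1}\lambda - \int_{\alpha_0}\lambda$ with $\alpha_i := A(\cdot, i)$, reducing exactness to the equality of these two $\lambda$-periods for every such pair of boundary loops.

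The key tool is that $R_\theta$ is the time-$\theta$ flow of the globally defined Hamiltonian $H$ on the exact symplectic manifold $(X,\omega=d\lambda)$; the standard identity $R_\theta^*\lambda - \lambda = d\int_0^\theta R_s^*(\lambda(X_H) - H)\,ds$ implies that $R_\theta^*\lambda$ has the same periods as $\lambda$ on every closed loop in $X$. Since $R_\theta\colon L_\gamma\to L_\gamma^\theta$ is a diffeomorphism, I write $\alpha_1 = R_\theta\circ\beta$ for a loop $\beta\subset L_\gamma$, giving $\int_{\alpha_1}\lambda = \int_\beta\lambda$, so it remains to check that $\beta$ and $\alpha_0$ are homologous in $L_\gamma$. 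The cylinder $L_\gamma$ has $H_1\cong\mathbb Z$, generated by the loop obtained from simultaneously shifting the parameters of $\gamma_1$ and $\gamma_2$ by one; its $y_1$-displacement is $(1-c)+c=1$, so it winds once in the $\theta_1$-direction of $X$, and the same holds for $L_\gamma^\theta$ since $R_\theta$ fixes the $(x_1,\theta_1)$-coordinates. Hence both inclusions $L_\gamma,L_\gamma^\theta\hookrightarrow X$ induce isomorphisms on $H_1\cong\mathbb Z$. Since $A$ makes $\alpha_1-\alpha_0$ null-homologous in $X$, and the cylinder $(s,t)\mapsto R_{t\theta}(\beta(s))$ makes $\alpha_1-\beta$ null-homologous in $X$, the loops $\beta$ and $\alpha_0$ agree in $H_1(X)$ and therefore in $H_1(L_\gamma)$. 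Closedness of $\lambda|_{L_\gamma}$ then yields $\int_\beta\lambda = \int_{\alpha_0}\lambda$, completing the chain of equalities.

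I expect the main obstacle to be the homological bookkeeping — verifying the isomorphism $H_1(L_\gamma)\to H_1(X)$ and tracking how loops identify under the $R_\theta$-isotopy. Once that is settled, the argument is a clean combination of Stokes' theorem with the Hamiltonian invariance of $\lambda$-periods in an exact symplectic manifold.
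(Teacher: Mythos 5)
Your proposal is correct and follows essentially the same route as the paper's proof: reduce to the $\lambda$-periods of the two boundary loops via Stokes, use exactness of $R_\theta^*\lambda-\lambda$ for the Hamiltonian flow, and use that the inclusions of the Lagrangian cylinders into $X$ induce isomorphisms on $\pi_1$ (you phrase it via $H_1$) together with closedness of $\lambda$ on the Lagrangian. The only difference is cosmetic: you transport $\alpha_1$ back to $L_\gamma$ by $R_\theta^{-1}$ and compare there, while the paper pushes $\beta_0$ forward by $R_\theta$ and compares in $L_\gamma^\theta$.
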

\begin{proof}
It suffices to show if $u\colon [0,1]\to \mathcal{P}(L_\gamma, L_\gamma^\theta)$ is a closed curve, then $\omega_{\widehat{H}}[u]=0$. We can view $u$ as a map from $S^1\times [0,1]$ to $X$, where $\beta_0\coloneqq u(, 0)$ is in $L_\gamma$ and $\beta_1\coloneqq u(, 1)$ is in $L_\gamma^\theta$. We get
\begin{equation*}
   \omega_{\widehat{H}}[u] =\int_{S^1}\int_0^1\omega(u_s, u_t-X_{\widehat{H}}(u,t))dtds = \int_u\omega-\int_0^1\int_{S^1}\frac{\partial\widehat{H}(u,t)}{\partial s}dsdt = \int_u\omega = \lambda(\beta_1)-\lambda(\beta_0), 
\end{equation*}
where in the last step, we applied Stokes' theorem. Firstly, note that $\beta_1$ and $R_\theta(\beta_0)$ are homotopic in $L_\gamma^\theta$ since $\beta_0$ and $R_\theta(\beta_0)$ are homotopic in $X$, $u$ gives a homotopy between $\beta_0$ and $\beta_1$ in $X$, and the inclusion map from $L_\gamma^\theta$ to $X$ induces an isomorphism on $\pi_1$. Secondly, we note that $\lambda$ is a closed one form on $L_\gamma^\theta$, so we must have $\lambda(R_\theta(\beta_0))=\lambda(\beta_1)$.
Finally, since $R_\theta$ is a Hamiltonian diffeomorphism, the form $R_\theta^*(\lambda)-\lambda$ is exact; see \cite[Chapter 3]{14} for more details. Hence, we must have $\lambda(\beta_0)=\lambda(R_\theta(\beta_0))$.
\end{proof}

Consider a primitive for $\omega_{\widehat{H}}$ on $\mathcal{P}(L_\gamma, L_\gamma^\theta)$, and denote it by $\mathcal{A}_{\widehat{H}}$; we call this the symplectic action functional.  
Critical points of $\mathcal{A}_{\widehat{H}}$ are in one-to-one correspondence with the set $\phi_{\widehat{H}}^1(L_\gamma)\cap L_\gamma^\theta$ where $\phi_{\widehat{H}}^1$ denotes the time one flow of $\widehat{H}$. This set is compact because $\widehat{H}$ is compactly supported, and $L_\gamma\cap L_\gamma^\theta$ is compact. In particular, this proves the critical values of $\mathcal{A}_{\widehat{H}}$ are bounded. Now consider a family of $\omega$-compatible time dependent almost complex structures $\{J_t\}_{0\leq t\leq 1}$ on $X$ that agree with $j$ at infinity. This family induces a metric on $\mathcal{P}(L_\gamma, L_\gamma^\theta)$, and the gradient flow lines of $\mathcal{A}_{\widehat{H}}$ are in one-to-one correspondence with the solutions of the perturbed Cauchy-Riemann equation (with respect to $\{J_t\}_{0\leq t\leq 1}$ and $\widehat{H}$) with boundary conditions on the pair $(L_\gamma, L_\gamma^\theta)$; we refer the unfamiliar reader to \cite[Section 1.3]{12}.  

For a brief discussion of when Lagrangian-Floer homology is well-defined, see \cite{11}. The main issues are to show that a moduli space
of solutions to a (perturbed) Cauchy-Riemann equation on the strip is precompact in
the moduli space of broken trajectories with sphere and disk bubbles, and then that
there are no sphere or disk bubbles. 
\begin{lem}
   Floer homology for the pair $(L_\gamma, L_\gamma^{\theta})$ is well-defined.
\end{lem}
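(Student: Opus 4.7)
The plan is to verify three ingredients needed for Lagrangian Floer homology of the pair $(L_\gamma, L_\gamma^\theta)$: absence of sphere and disk bubbling, a uniform $C^0$-bound on Floer trajectories, and generic transversality. The bubbling and transversality arguments follow readily from what is already in place; the main analytical obstacle is the $C^0$-bound.

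First I would handle bubbling and energy bounds. Since $X$ is diffeomorphic to $\mathbb{R} \times S^1 \times \mathbb{C}$ and hence homotopy equivalent to $S^1$, we have $\pi_2(X) = 0$, ruling out sphere bubbles. As observed in the proof of Lemma~\ref{Action Functional}, the inclusions $L_\gamma, L_\gamma^\theta \hookrightarrow X$ induce isomorphisms on $\pi_1$, so $\pi_2(X, L_\gamma) = \pi_2(X, L_\gamma^\theta) = 0$ by the long exact sequence of the pair. Since $\lambda$ restricts to a closed one-form on each Lagrangian and the boundary of any class in $\pi_2(X,L)$ is null-homotopic in $L$, any holomorphic disk with boundary on $L_\gamma$ or $L_\gamma^\theta$ has vanishing $\lambda$-integral over its boundary, hence zero symplectic area, hence is constant. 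Uniform energy bounds for strips between two fixed asymptotic intersection points follow from Lemma~\ref{Action Functional}, which makes the symplectic action functional $\mathcal{A}_{\widehat{H}}$ single-valued, together with compactness of $\phi_{\widehat{H}}^1(L_\gamma) \cap L_\gamma^\theta$.

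The main step is the $C^0$-bound. Because $\widehat{H}$ is compactly supported and $\{J_t\}$ agrees with $j$ outside a compact set, any Floer strip that escapes to infinity is, out there, an honest $j$-holomorphic strip with boundary on $(L_\gamma, L_\gamma^\theta)$. I would apply a maximum principle using the plurisubharmonic exhaustion $g = x_1^2 + x_2^2 + y_2^2$ of $(X, j)$. Both Lagrangians are contained in the slab $\{|x_1| \leq N\}$, since the original curves lie in $[-N,N]\times\mathbb{R}$, so the maximum principle applied to $x_1^2$ composed with a Floer strip immediately bounds its $x_1$-coordinate. In the $w = x_2 + iy_2$ plane, the projections of $L_\gamma$ and $L_\gamma^\theta$ are contained in two strips whose axes differ by the angle $\theta \in (0,\pi)$, with bounded intersection; this is precisely the asymptotic transversality already exploited in Lemma~\ref{cpt lemma} to prove compactness of $L_\gamma \cap L_\gamma^\theta$. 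The hardest part of the argument is then to combine the $x_1^2$ bound with an integrated maximum principle (or a target-side monotonicity argument) in the $w$-plane, adapted to this asymptotic transversality, to show that Floer strips with fixed asymptotic intersection points remain in a compact subset of $X$. Once the $C^0$-bound is in hand, generic transversality by a compactly supported perturbation of $\{J_t\}$ or $\widehat{H}$ is standard, and the Floer differential is well-defined with $\partial^2 = 0$.
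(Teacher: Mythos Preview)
Your treatment of bubbling and energy bounds matches the paper's proof essentially verbatim: exactness of $(X,\omega)$ kills sphere bubbles, the $\pi_1$-isomorphism forces $\pi_2(X,L_\gamma)=\pi_2(X,L_\gamma^\theta)=0$ and hence no disk bubbles, and the single-valued action functional from Lemma~\ref{Action Functional} together with compactness of the perturbed intersection gives the uniform energy bound.

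The divergence is in the $C^0$-bound. The paper does not argue directly at all: having recorded that $(X,\omega)$ is Weinstein at infinity with the subharmonic exhaustion $g$ and that $J_t=j$ outside a compact set, it simply invokes \cite[Theorem 2.1]{5} as a black box to conclude that every Floer strip has image in a fixed compact set. You instead attempt to prove this by hand. Your $x_1$-bound via the maximum principle is correct and clean (both Lagrangians do lie in $\{|x_1|\le N\}$, since $z$ is a convex combination of points in $\gamma_1$ and $\gamma_2$, and $R_\theta$ fixes $z$). But the $w$-direction is only a sketch: you name ``integrated maximum principle'' and ``target-side monotonicity'' without carrying either out, and you explicitly flag this as the hardest part. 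That is honest, but it is precisely the nontrivial analytic content that the cited theorem packages---the Lagrangians are genuinely unbounded in the $w$-plane, the naive maximum principle for $g$ fails since $g$ is unbounded on each Lagrangian, and turning the asymptotic transversality (the angle $\theta$ between the two strips) into an actual $C^0$-estimate on strips requires real work. So your proposal is not wrong, but it trades the paper's one-line citation for an argument you have not completed; to close the gap you would need to actually run, say, the monotonicity argument in the $w$-factor using the energy bound.
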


\begin{proof}
     Choose a compactly supported Hamiltonian $\widehat{H}$ such that $\phi^1_{\widehat{H}}(L_\gamma)$ intersects $L_\gamma^\theta$ transversely and a family of $\omega$-compatible time dependent almost complex structures $\{J_t\}_{0\leq t\leq 1}$ on $X$ that agree with $j$ at infinity.
     Consider a solution of the perturbed Cauchy-Riemann equation $u$ that converges to $x$ and $y$ on its two ends; we have $E(u) = \mathcal{A}_{\widehat{H}}(x)-\mathcal{A}_{\widehat{H}}(y)$, where $E$ is the energy of $u$. Since the critical values of $\mathcal{A}_{\widehat{H}}$ are bounded, we conclude the energy is uniformly bounded for every solution $u$. It follows from \cite[Theorem 2.1]{5} that any solution $u$ must have bounded image inside a compact set depending on $\{J_t\}_{0\leq t\leq 1}$ and $\widehat{H}$. 
      This proves the moduli space of solutions is precompact in the moduli space of broken trajectories with sphere and disk bubbles. The sphere bubbles cannot happen since $(X,\omega)$ is exact, and the disk bubbles cannot happen because both $\pi_2(X,L_\gamma)$ and $\pi_2(X,L_\gamma^\theta)$ are trivial; this follows from the fact that the inclusion map for each of these Lagrangians induces an isomorphism on the fundamental group. 
\end{proof}

Our next goal is to show this Floer homology is invariant under a certain Hamiltonian isotopy that is not necessarily compactly supported; this will reduce the computation of $HF(L_\gamma, L_\gamma^\theta)$ to the case $HF(L_\delta, L_\delta^\theta)$, where $\delta_1$ and $\delta_2$ are two disjoint vertical lines.
\begin{defi}
    Let $L_0, L_1$ be two Lagrangians in $X$, and consider a Hamiltonian isotopy $\phi\colon X\times[0,1]\to X$. We say this Hamiltonian isotopy \textit{does not escape to infinity} with respect to the pair $(L_0, L_1)$ if the intersection $L_0\cap\phi(L_1\times[0,1])$ is compact. 
\end{defi}

It follows from \cite[Theorem I]{5} that if $\phi$ is a Hamiltonian isotopy that does not escape to infinity with respect to the pair $(L_0, L_1)$, then the continuation map from $CF(L_0, L_1)$ to $CF(L_0, \phi_1(L_1))$ is well-defined, and it induces an isomorphism between $HF(L_0,L_1)$ and $HF(L_0, \phi_1(L_1))$. 

Let $N$ be a positive number such that each $\gamma_i$ is inside $[-N, N]\times\mathbb{R}$. The quotient of $\mathbb{C}$ under translation by $\sqrt{-1}$ is an infinite cylinder, and we denote the quotient map by $q$. The pair $(\gamma_1,\gamma_2)$ is projected to a pair of disjoint simple closed curves in $[-N, N]\times S^1$. We can find two distinct numbers $\alpha_1$ and $\alpha_2$ in $(-N,N)$ so that the signed area between $q(\gamma_i)$ and $\{\alpha_i\}\times S^1$ is zero for each $i$. Therefore, there are Hamiltonian functions $H_1$ and $H_2$ supported in $[-N,N]\times S^1$ such that the time one flow of $H_i$ takes $q(\gamma_i)$ to $\{\alpha_i\}\times S^1$ for each $i$. Define a Hamiltonian function $F$ on $(\mathbb{C}^2, \omega'_Q)$ by
\begin{equation*}
    F(z,w) = (1-c)H_1(q(z))+cH_2(q(w)).
\end{equation*}
This Hamiltonian is invariant under the action by $T_1$, and it takes $\gamma_1\times\gamma_2$ to $\delta_1\times\delta_2$ where $\delta_i=\{\alpha_i\}\times \mathbb{R}$. In particular, this induces a Hamiltonian isotopy $\phi$ on $X\times[0,1]$ such that $\phi_1(L_\gamma) = L_\delta$.

\begin{prop}
\label{invariance}
 We have
\begin{equation}
    HF(L_\gamma, L_\gamma^\theta)\cong HF(\phi_1(L_\gamma), L_\gamma^{\theta}) \cong HF(L_\delta, L_\delta^\theta).
\end{equation}
\end{prop}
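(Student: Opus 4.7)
The plan is to apply the invariance statement \cite[Theorem I]{5} twice. Each application moves one of the two Lagrangians while the other stays fixed, giving the two isomorphisms in turn; the key technical issue is to verify, in each step, that the isotopy does not escape to infinity with respect to the relevant pair. Since the Hamiltonian $\bar{F} \coloneqq F\circ\psi_c$ generating $\phi$ on $X$ is not compactly supported (it is periodic in the $y_2$-direction), this verification is a quantitative geometric statement rather than a triviality, and it is close in spirit to Lemma \ref{cpt lemma}.

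For the first isomorphism, I would begin by describing $\phi_t(L_\gamma)$ explicitly. The flow of $F$ on $(\mathbb{C}^2,\omega'_Q)$ decouples into independent flows of $H_1$ and $H_2$ on the two factors, so it evolves $\gamma_1\times\gamma_2$ through products $\gamma_1^t\times\gamma_2^t$. Because each $H_i$ is supported in the strip $[-N,N]\times S^1$, each intermediate curve $\gamma_i^t$ remains inside $[-N,N]\times\mathbb{R}$ for every $t\in[0,1]$. Consequently $\phi_t(L_\gamma) = \pi(\psi_c^{-1}(\gamma_1^t\times\gamma_2^t))$, and the non-escape condition reduces to bounding the set of $\pi(z,w)\in L_\gamma^\theta\cap\bigcup_{t\in[0,1]}\phi_t(L_\gamma)$.

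A lift $(z,w)\in\mathbb{C}^2$ of such a point satisfies $z+cw,\ z+(c-1)w\in [-N,N]\times\mathbb{R}$ from membership in some $\phi_t(L_\gamma)$, and $z+ce^{-\sqrt{-1}\theta}w,\ z+(c-1)e^{-\sqrt{-1}\theta}w\in [-N,N]\times\mathbb{R}$ from membership in $L_\gamma^\theta$. Taking real parts of the first pair bounds $\mathrm{Re}(z)$ and $\mathrm{Re}(w)$; the real part of the third condition then isolates $\mathrm{Im}(w)$ with coefficient $c\sin\theta$, which is nonzero because $\theta\in(0,\pi)$, producing a uniform bound on $\mathrm{Im}(w)$. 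Since $\theta_1\in S^1$ is automatically bounded, the projection $\pi(z,w)$ lies in a fixed compact subset of $X$. Combined with a standard closedness check using compactness of $[0,1]$, this establishes compactness of the full intersection, and \cite[Theorem I]{5} delivers $HF(L_\gamma, L_\gamma^\theta)\cong HF(L_\delta, L_\gamma^\theta)$.

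For the second isomorphism, I would use the conjugated isotopy $\phi'_t\coloneqq R_\theta\circ\phi_t\circ R_\theta^{-1}$, which is Hamiltonian (with generator $\bar F\circ R_\theta^{-1}$) and satisfies $\phi'_1(L_\gamma^\theta) = R_\theta\phi_1(L_\gamma) = L_\delta^\theta$. Applying $R_\theta^{-1}$ to the intersection $L_\delta\cap\bigcup_t\phi'_t(L_\gamma^\theta)$ turns the non-escape verification into bounding $R_\theta^{-1}(L_\delta)\cap\bigcup_t\phi_t(L_\gamma)$, which is handled by an analogous argument: the first factor imposes the pair of real-part equations $\mathrm{Re}(z+ce^{\sqrt{-1}\theta}w)=\alpha_1$ and $\mathrm{Re}(z+(c-1)e^{\sqrt{-1}\theta}w)=\alpha_2$, and the same chain of real-part manipulations (again using $\sin\theta\neq 0$) bounds all four coordinates. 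The main obstacle throughout is precisely this last step of extracting a bound on $\mathrm{Im}(w)$, which is the symplectic-geometric content captured by Lemma \ref{cpt lemma}.
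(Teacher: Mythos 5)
Your proposal is correct and follows essentially the same route as the paper: two applications of the non-escaping Hamiltonian invariance result of \cite[Theorem I]{5}, first with $\phi$ and then with the conjugated isotopy $R_\theta\circ\phi_s\circ R_\theta^{-1}$, with compactness of the relevant intersections obtained from the fact that the intermediate curves stay in the strip $[-N,N]\times\mathbb{R}$. Your explicit real-part manipulations (and the pull-back by $R_\theta^{-1}$ in the second step) simply unwind what the paper gets by citing Lemma \ref{cpt lemma} directly; only the parenthetical claim that $F\circ\psi_c$ is ``periodic in the $y_2$-direction'' is inaccurate, though its non-compact support, which is the point, is true and is exactly why the verification is needed.
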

\begin{proof}
    We have a Hamiltonian isotopy $\phi$ from $L_\gamma$ to $L_\delta$ and a Hamiltonian isotopy from $L_\gamma^{\theta}$ to $L_\delta^\theta$ defined by $u_s\coloneqq R_\theta\circ\phi_s\circ R_\theta^{-1}$. Choose an arbitrary time $s\in[0,1]$, and suppose $\pi(z,w)$ is in $\phi_s(L_\gamma)\cap L_\gamma^\theta$. Then $z$, $z+cw$, and $z+ce^{-\sqrt{-1}\theta}w$ are all inside the strip $[-N, N]\times\mathbb{R}$; hence, $\Vert w\Vert$ is bounded by Lemma \ref{cpt lemma}. This proves the intersection $\phi([0,1]\times L_\gamma)\cap L_\gamma^{\theta}$ is inside $[-N, N]\times S^1\times D_{b(N,c,\theta)}$, so it is compact. Similarly, one can show $L_\delta\cap u([0,1]\times L_\gamma^\theta)$ is compact. Therefore, both of these Hamiltonian isotopies do not escape to infinity, and we get the claim.
\end{proof}
\begin{prop}
\label{non-vanishing}
Let $\delta_1 = \{\alpha_1\}\times\mathbb{R}$ and $\delta_2 = \{\alpha_2\}\times\mathbb{R}$ be two distinct vertical lines in the plane. Then 
    $HF(L_\delta, L_\delta^\theta)\cong \mathbb{F}_2^2$.
\end{prop}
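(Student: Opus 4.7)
The plan is to exploit the product structure of $X$. Identify $X$ symplectically with $(\mathbb{R}\times S^1)\times\mathbb{C}$, so that $L_\delta$ factors as $\ell_1\times\ell_2$, where $\ell_1=\{c_1\}\times S^1$ with $c_1=(1-c)\alpha_1+c\alpha_2$ is a circle in the cylinder factor, and $\ell_2=\{c_2\}\times\mathbb{R}$ with $c_2=\alpha_1-\alpha_2\neq 0$ is a vertical line in the $\mathbb{C}$ factor. Since $R_\theta$ acts trivially on $(\mathbb{R}\times S^1)$ and by rotation on $\mathbb{C}$, we have $L_\delta^\theta=\ell_1\times\ell_2^\theta$, and a direct computation shows that $\ell_2\cap\ell_2^\theta$ is the single transverse point $p=(c_2,\,c_2\tan(\theta/2))$. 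Hence $L_\delta\cap L_\delta^\theta=\ell_1\times\{p\}$ is a clean Morse--Bott circle.

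To obtain a transverse problem, I would perturb only in the first factor: pick a Morse function $f\colon S^1\to\mathbb{R}$ with exactly two critical points $a_{\min},a_{\max}$, extend it to a compactly supported Hamiltonian on $\mathbb{R}\times S^1$, and pull back to $X$. For a sufficiently small perturbation, the time-$1$ flow displaces $\ell_1$ to a nearby circle $\ell_1'$ meeting $\ell_1$ transversely precisely at $a_{\min}$ and $a_{\max}$, so the perturbed Floer complex has exactly the two generators $(a_{\min},p)$ and $(a_{\max},p)$.

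Now choose a split $\omega$-compatible almost complex structure $J=j_1\oplus j_2$. The perturbed Cauchy--Riemann equation decouples, and every Floer strip splits as $(u_1,u_2)$ with $u_2\colon\mathbb{R}\times[0,1]\to\mathbb{C}$ an honest $j_2$-holomorphic strip, with boundary on $(\ell_2,\ell_2^\theta)$ and both ends converging to $p$. Since $\mathbb{C}$ is exact and $\ell_2,\ell_2^\theta$ are each exact Lagrangian, the symplectic area of $u_2$ equals a difference of primitives evaluated at its ends, which vanishes because both ends coincide with $p$. By positivity of area for $j_2$-holomorphic maps, $u_2$ must be constant. Therefore the Floer differential reduces to the Morse differential of $f$ on $S^1$, which vanishes modulo $2$ because the two gradient trajectories from $a_{\max}$ to $a_{\min}$ cancel, yielding $HF(L_\delta,L_\delta^\theta)\cong H_*(S^1;\mathbb{F}_2)\cong\mathbb{F}_2^2$.

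The main obstacle is making this Künneth-type argument rigorous in view of the non-compactness of $\ell_2$ and $\ell_2^\theta$: one must verify that a split almost complex structure suffices for transversality in the perturbed Floer problem (allowing, if necessary, a further small compactly supported perturbation of $J$, which by invariance does not change the count) and that no holomorphic strip in the $\mathbb{C}$ factor escapes to infinity. The area argument above handles strips between the two generators directly, while the general compactness needed to run the theory is supplied by the energy bounds already established earlier in the paper.
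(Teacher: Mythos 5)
Your argument is correct in substance, but it takes a genuinely different route from the paper. The paper also begins by identifying $L_\delta\cap L_\delta^\theta$ as a clean (Morse--Bott) circle, by solving the incidence conditions in $\mathbb{C}^2$ and comparing tangent spaces, but it then concludes immediately: exactness of the action one-form (Lemma \ref{Action Functional}) plus a clean-intersection theorem of Po\'{z}niak type \cite[Theorem 3.4.11]{6} give $HF(L_\delta,L_\delta^\theta)\cong H^*(S^1)\cong\mathbb{F}_2^2$. You instead exploit the product structure $L_\delta=\ell_1\times\ell_2\subset(\mathbb{R}\times S^1)\times\mathbb{C}$ and prove the Morse--Bott statement by hand: a Morse perturbation pulled back from the cylinder factor, a split almost complex structure, and an action/area argument showing that the $\mathbb{C}$-factor component of every Floer strip is constant, which reduces the differential to the standard circle-in-the-cylinder count. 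This buys self-containedness (no citation of clean-intersection machinery) at the cost of bookkeeping that the paper's citation absorbs, and three points need tightening. First, your perturbing Hamiltonian is not compactly supported on $X$ (it is constant in the $\mathbb{C}$-directions), while the paper's framework uses compactly supported perturbations; you should cut it off in the $\mathbb{C}$-factor and then confine the strips to the region where the data remain split, e.g.\ via the $C^0$-bounds of \cite[Theorem 2.1]{5} together with Lemma \ref{cpt lemma}, exactly as in the paper's well-definedness and invariance arguments. Second, your fallback of perturbing $J$ for transversality would destroy the splitting on which the constancy of $u_2$ rests; in fact no perturbation is needed, because once every strip has constant $\mathbb{C}$-component the linearized operator splits, regularity in the normal direction is automatic at a transverse intersection of two lines with angle in $(0,\pi)$, and transversality in the two-dimensional cylinder factor is standard. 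Third, the second coordinate of your point $p$ differs in sign from the paper's value $-c_2\tan(\theta/2)$ only because of the orientation convention for $R_\theta$; this is immaterial. With these points addressed, your computation is a valid alternative proof of the proposition.
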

\begin{proof}
   Assume $\pi(z,w)$ is in $L_\delta\cap L_\delta^{\theta}$. Then we must have $z+cw\in \delta_1$ and $z+(c-1)w\in \delta_2$. Thus $z$ lies on the line $\{x=(1-c)\alpha_1+c\alpha_2\}$. We also have $z+ce^{-\sqrt{-1}\theta}w \in \delta_1$ and $z+(c-1)e^{-\sqrt{-1}\theta}w\in \delta_2$. Hence, we get
   \begin{equation*}
       v = (\alpha_1-\alpha_2)-(\alpha_1-\alpha_2)\tan(\frac{\theta}{2})\sqrt{-1}.
   \end{equation*}
   This shows the intersection of $\psi_c^{-1}(\delta_1\times\delta_2)$ and $R_\theta(\psi_c^{-1}(\delta_1\times\delta_2))$ is diffeomorphic to the real line, and can be identified with the image of the embedding
   \begin{equation*}
       s\in\mathbb{R}\mapsto ((1-c)\alpha_1+c\alpha_2+s\sqrt{-1}, (\alpha_1-\alpha_2)-(\alpha_1-\alpha_2)\tan(\frac{\theta}{2})\sqrt{-1})\in \mathbb{C}^2.
   \end{equation*}
    We claim this intersection is clean. Identify $\mathbb{C}^2$ with $\mathbb{R}^4$, and consider the coordinates $(x_1,y_1,x_2,y_2)$. At a point $(u_1,u_2)$ in $\psi_c^{-1}(\delta_1\times\delta_2)$, we have 
   \begin{equation*}
       T_{(u_1,u_2)}\psi_c^{-1}(\delta_1\times\delta_2) = \langle \partial_{y_1}, \partial_{y_2}\rangle,
   \end{equation*}
   and for a point $(u_1, u_2)$ in $R_\theta(\psi_c^{-1}(\delta_1\times\delta_2))$, we have
   \begin{equation*}
       T_{(u_1,u_2)}R_\theta(\psi_c^{-1}(\delta_1\times\delta_2)) = \langle \partial_{y_1}, \cos(\theta)\partial_{y_2}-\sin(\theta)\partial_{x_2}\rangle.
   \end{equation*}
   We conclude the intersection is clean because $\theta\in(0,\pi)$. This shows the intersection between $L_\delta$ and $L_\delta^\theta$ is also clean, since $\pi\colon \mathbb{C}^2\to X$ is a local diffeomorphism. Furthermore, $L_\delta\cap L_\delta^\theta$ becomes a circle after applying $\pi$ to the line of intersection in $\mathbb{C}^2$. Consider the symplectic action one form $\omega_{\widehat{H}}$ on $\mathcal{P}(L_\delta, L_\delta^\gamma)$ with $\widehat{H}=0$; this form is exact by Lemma \ref{Action Functional}, so the symplectic action functional is defined on the whole of $\mathcal{P}(L_\gamma, L_\gamma^\theta)$. Hence, there is an isomorphism between $H^*(S^1)\cong \mathbb{F}_2^2$ and $HF(L_\delta, L_\delta^{\theta})$ according to \cite[Theorem 3.4.11]{6}.
\end{proof}

\section{Proof of the Main Theorem}

\begin{proof}[\textbf{Proof of Theorem \ref{Main Thm}}]
We can find a positive number $N$ such that $\gamma_1$ and $\gamma_2$ are inside $[-N,N]\times\mathbb{R}$.
First assume $\gamma_1$ and $\gamma_2$ are smooth. By Propositions \ref{invariance} and \ref{non-vanishing}, we get
$HF(L_\gamma, L_\gamma^\theta)\cong HF(L_\delta, L_\delta^\theta)\cong\mathbb{F}_2^2$ where $(\delta_1, \delta_2)$ is a pair of distinct vertical lines in $[-N,N]\times\mathbb{R}$. Hence, the intersection $L_\gamma\cap L_\gamma^\theta$ must be non-empty, and $(\gamma_1, \gamma_2)$ admits a balanced inscription of $Q$. 
Moreover, $L_\gamma$ and $L_\gamma^\theta$ intersect transversely for a pair of generic smooth curves; thus in this case, we must have
\begin{equation*}
    |L_\gamma\cap L_\gamma^\theta|\geq \dim(HF(L_\gamma, L_\gamma^\theta))=2.
\end{equation*}
We conclude for a pair of generic smooth curves, there are at least two balanced inscriptions of $Q$ that are not related under translation by $\sqrt{-1}$.
Now suppose $\gamma_1$ and $\gamma_2$ are continuous, and approximate each one by a sequence of smooth periodic curves $\gamma_i^n$ inside the strip $[-N,N]\times\mathbb{R}$ with $\gamma_1^n$ and $\gamma_2^n$ disjoint for every $n$. Let $Q_n$ denote an inscription of $Q$ inside the pair $(\gamma_1^n, \gamma_2^n)$. One can assume the intersection point between the diagonals of $Q_n$ lies in $[-N,N]\times[0,1]$ for every $n$. This can be done because every translation of $Q_n$ by an integer multiple of $\sqrt{-1}$ is a also an inscription of $Q$ in $(\gamma_1^n, \gamma_2^n)$. Moreover, the diameter length of each $Q_n$ is bounded by a universal constant depending only on $N,\theta,$ and $c$ according to Lemma \ref{cpt lemma}. We conclude that the sequence $Q_n$ must have a limit point $\tilde{Q}$ inscribed by the pair $(\gamma_1, \gamma_2)$; this limit point is a non-degenerate isosceles trapezoid because $\gamma_1$ and $\gamma_2$ are disjoint.   
\end{proof}
\begin{proof}[\textbf{Proof of Corollary \ref{rect cor}}]
This follows from Theorem \ref{Main Thm} applied to the case $(c=\frac{1}{2}, \theta)$.
\end{proof}

\bibliographystyle{amsplain}
\bibliography{Ref}

\begin{flushleft}
    Boston College. Massachusetts, USA.\\
    naserisa@bc.edu 
\end{flushleft}
\end{document}